\newtheorem{thrm}{Theorem}[section]
\newtheorem{lem}[thrm]{Lemma}
\newtheorem{prop}[thrm]{Proposition}
\newtheorem{conj}[thrm]{Conjecture}
\theoremstyle{definition}
\newtheorem{rem}[thrm]{Remark}
\numberwithin{equation}{section}
\author{F. Pazuki}
\address{
IMB, Univ. Bordeaux 1\\
351, cours de la Lib\'eration\\
33405 Talence, France\\}
\email{fabien.pazuki@math.u-bordeaux1.fr}
\thanks{Many thanks to P. Autissier, S. David and G. R\'emond for their questions and remarks, and to the anonymous referee for further remarks. Thanks to the University of Bordeaux 1 for supporting the conference ``Jeudynamiques'' in 2010.}
\keywords{Arithmetic dynamics, Abelian varieties.}
\subjclass{37P55, 14G40.}
\begin{document}

\title[Polarized morphisms between abelian varieties]{Polarized morphisms between abelian varieties}

\begin{abstract}
In this paper we will study the Dynamical Manin-Mumford problem, focusing on the question of polarizability for endomorphisms of an abelian variety $A$ and on the action of a Frobenius and its Verschiebung on the diagonal subvariety of $A\times A$. We complete the study with different polarizability criteria.
\end{abstract}
\maketitle

\section{Introduction}
  
   In this work we will study the role of polarizability for morphisms between abelian varieties in the Dynamical Manin-Mumford problem. The results also provide a way to strengthen the article \cite{Paz}. 
   
   Let us recall a few definitions: an endomorphism $\psi:X\to X$ of a projective variety is said to have a \textit{polarization} if there exists an ample divisor $D$ on $X$ such that $\psi^{*}D\sim d D$ for some $d>1$, where $\sim$ stands for linear equivalence. Another (equivalent) way of defining this notion is to use an ample line bundle $\mathcal{L}$ over $X$ such that $\psi^{*}\mathcal{L}= \mathcal{L}^{\otimes d}$. The integer $d$ is called the \textit{weight} of the morphism $\psi$.
   
A subvariety $Y$ of $X$ is \textit{preperiodic} under $\psi$ if there exists integers $m\geq0$ and $s>0$ such that $\psi^{m+s}(Y)=\psi^{m}(Y)$. We denote $\mathrm{Prep}_{\psi}(X)$ as the set of preperiodic points of $X$ under $\psi$. We will focus on the dynamical Manin-Mumford Conjecture 1.2.1 in \cite{Zha}:

\begin{conj}\label{Zhang}
(Algebraic Dynamical Manin-Mumford) Let $\psi\!\!: X\to X$ be an endomorphism of a projective variety over a number field $k$ with a polarization, and let $Y$ be a subvariety of $X$. If $Y\cap \mathrm{Prep}_{\psi}(X)$ is Zariski dense in $Y$, then $Y$ is a preperiodic subvariety.
\end{conj}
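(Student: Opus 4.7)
The plan is to attack Conjecture~\ref{Zhang} in the abelian variety setting that the paper targets, by reducing it to the classical Manin--Mumford theorem of Raynaud. First, I would analyze the structure of $\mathrm{Prep}_\psi(A)$ for a polarized endomorphism $\psi:A\to A$ with $\psi^{*}L\sim L^{\otimes d}$, $d>1$. Writing $\psi(x)=\varphi(x)+c$ with $\varphi$ a group homomorphism and $c\in A$, one checks that $[1]-\varphi$ composed with the relevant dual data gives an isogeny (its characteristic polynomial has no eigenvalue equal to $1$ because the eigenvalues of $\varphi$ have absolute value $\sqrt{d}>1$), so $\psi$ admits a unique fixed point $c_0$. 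After translating by $c_0$, one reduces to the case where $\psi=\varphi$ is an isogeny. The associated canonical height then satisfies $\hat h_L\circ\varphi = d\cdot \hat h_L$, and by Northcott $\mathrm{Prep}_\varphi(A)=A_{\mathrm{tors}}$.

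Having identified $\mathrm{Prep}_\psi(A)$ with $c_0+A_{\mathrm{tors}}$, the density hypothesis becomes: $Y-c_0$ contains a Zariski dense set of torsion points. Raynaud's theorem then expresses $Y-c_0$ as a finite union of torsion translates $t_i+B_i$ of abelian subvarieties $B_i\subset A$.

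The last step is to verify that such a translate is preperiodic under $\psi$. Since $\varphi$ permutes the countable but locally finite collection of abelian subvarieties of $A$ of each fixed dimension, some iterate $\varphi^{s}$ stabilizes each $B_i$; and because $\varphi(t_i)-t_i$ lies in $A_{\mathrm{tors}}$, the orbit $\{\varphi^{n}(t_i+B_i)\}_{n\geq 0}$ is eventually periodic, which translates back to preperiodicity of $c_0+t_i+B_i$ under $\psi$.

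The main obstacle I anticipate is the very first step: decomposing a polarized $\psi$ as an isogeny plus a translation, and then ensuring that its fixed point $c_0$ is defined over the base field, uses the group structure of $A$ in a way that can be delicate, especially in positive characteristic. The paper's emphasis on the Frobenius and its Verschiebung acting on the diagonal $\Delta\subset A\times A$ strongly suggests that the cleanly reduced picture above breaks down precisely in that setting: the polarization of $\psi$ on $A\times A$ need not restrict to a polarization on $\Delta$, so $\mathrm{Prep}_{\psi}(A\times A)\cap \Delta$ may acquire non-torsion points and the classical Manin--Mumford reduction can fail. I would therefore expect the output to be either a theorem conditional on an additional polarizability hypothesis for $\psi|_{Y}$, or an explicit obstruction/counterexample coming from the Frobenius--Verschiebung pair on $\Delta$.
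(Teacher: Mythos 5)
The statement you were asked about is not a theorem of the paper but a conjecture (Zhang's dynamical Manin--Mumford conjecture), and the paper's main result, Theorem~\ref{Frob}, is precisely a construction of counterexamples to it: for a CM abelian variety $A$ and a suitable prime $\mathfrak{p}$, the pair $(A\times A,\,F_{\mathfrak{p}}\times V_{\mathfrak{p}})$ is polarized, the diagonal $\Delta$ contains all of its torsion points (hence a Zariski dense set of preperiodic points), and yet $\Delta$ is not preperiodic. So no proof of the statement can be correct, and your attempt does contain a concrete false step. The first part of your reduction is fine: for a polarized endomorphism of an abelian variety one can translate away the unique fixed point, identify $\mathrm{Prep}_{\psi}(A)$ with a translate of $A_{\mathrm{tors}}$ via the canonical height, and invoke Raynaud to write $Y-c_{0}$ as a finite union of torsion translates $t_{i}+B_{i}$ of abelian subvarieties. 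The gap is the final claim that ``some iterate $\varphi^{s}$ stabilizes each $B_{i}$'' because $\varphi$ permutes the abelian subvarieties of a given dimension. An isogeny does act on the (countable) set of abelian subvarieties, but nothing forces the orbit of a given $B_{i}$ to be finite, and in the paper's example it is not: the images $(F_{\mathfrak{p}}\times V_{\mathfrak{p}})^{n}(\Delta)=\{(F_{\mathfrak{p}}^{n}P,\,V_{\mathfrak{p}}^{n}P)\ \vert\ P\in A\}$ are pairwise distinct abelian subvarieties of $A\times A$ for all $n$, since a repetition would force $F_{\mathfrak{p}}^{m}=V_{\mathfrak{p}}^{m}$ for some $m>0$, which is excluded by the choice $\mathfrak{p}\notin S_{3}$ (for $\mathbb{Q}(i)$, concretely, $(2+i)/(2-i)$ is not a root of unity). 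So Raynaud's structure theorem correctly describes $Y$, but the dynamical conclusion does not follow, and indeed fails.

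To your credit, your closing paragraph anticipates exactly this: you suspected that the Frobenius--Verschiebung action on $\Delta$ is where the reduction breaks, and that the expected output is a counterexample rather than a proof. That is what the paper does. Note, however, that the failure mechanism is not the one you name (the restriction of the polarization to $\Delta$, or non-torsion preperiodic points on $\Delta$): the preperiodic points in play are honest torsion points, and the polarization issue is handled by Lemma~\ref{pola compo}, which shows $V_{\mathfrak{p}}$ is polarized with the same weight as $F_{\mathfrak{p}}$. The obstruction is purely that the induced isogeny ``rotates'' $\Delta$ through infinitely many distinct abelian subvarieties, i.e.\ exactly the step your argument assumed away.
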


This conjecture is very natural as it contains the classical Manin-Mumford conjecture proved by Raynaud in 1983: if one chooses $\psi=[n]$ and $X=A$ an abelian variety, then $\psi$ is polarized with weight $n^2$ and the preperiodic points are just torsion points.

One can find some examples of non-trivial endomorphisms where Conjecture \ref{Zhang} is true for the diagonal subvariety of a power of an abelian variety. Let $A$ be an abelian variety and $\mathcal{L}$ an ample symmetric line bundle. Consider the endomorphisms $ \alpha$ and $ \beta$ on $A^4$ given by $ \alpha(x,y,z,t)=(x+z,y+t,x-z,y-t)$ and $\beta(x,y,z,t)=(x+y,x-y,z+t,z-t)$. If we denote $\mathcal{L}_4=p_1^*\mathcal{L}\otimes p_2^*\mathcal{L}\otimes p_3^*\mathcal{L} \otimes p_4^*\mathcal{L}$, then $\alpha^*\mathcal{L}_4=\beta^*\mathcal{L}_4=\mathcal{L}_4^{\otimes 2}$, thus the morphism $(\alpha,\beta)$ is polarized by $\mathcal{L}_4$. Of course one has $\alpha^2=\beta^2$, hence the diagonal of $A^4\times A^4$ is actually preperiodic under the morphism $(\alpha,\beta)$. 

One can consult \cite{BaHsi} for other examples where this conjecture is proved, namely on $\mathbb{P}^1\times\mathbb{P}^1$ under a coordinatewise polynomial action.

D. Ghioca and T. Tucker found a family of counterexamples to this conjecture in \cite{GhTuZh}. They use squares of elliptic curves with complex multiplication. This was followed by the paper \cite{Paz} where counterexamples of dimension four were constructed over squares of CM abelian surfaces using complex multiplication on curves of genus 2 and some polarizability lemma. In this work we show a more general theorem, valid in greater dimension as well. The main result obtained is Theorem \ref{Frob}, which gives a general way of producing dynamical systems with particular intersection properties. 

 Let $k$ be a CM number field and $\bar{k}$ an algebraic closure. We will also denote the complex conjugation by a bar, but the context will always be clear. Let $\mathcal{O}_k$ be the ring of integers of $k$. Let $A$ be an abelian variety over $k$ with complex multiplication by $k$. Let $S_1$ be the set of ramified places of $k$ and let $S_2$ be the set of places where $A$ has bad reduction. Let $S_3$ be the set of places $\frak{a}$ such that $F_{\frak{a}}^n= V_{\frak{a}}^n$ for a positive integer $n$, where $F_\frak{a}$ is the endomorphism of $A$ which lifts the geometric Frobenius and $V_{\frak{a}}$ the associated Verschiebung (this set is small, see remark \ref{CM unit}). We will denote by $S=S_1\cup S_2\cup S_3$ the union of these places.

\begin{thrm}\label{Frob}
Let $A$ be an abelian variety defined over a number field $k$ that contains the field of complex multiplications $\mathrm{End}_{\bar{k}}(A)\otimes{\mathbb{Q}}$ and such that $\mathcal{O}_k\subset{\mathrm{End}_{\bar{k}}(A)}$.  Let $\frak{p}\notin{S}$. Let $F_{\frak p}$ denote the Frobenius associated to $\frak{p}$ and $V_{\frak p}$ denote the Verschiebung associated to $F_{\frak p}$. Then $F_{\frak p}$ and $V_{\frak p}$ are polarizable and the dynamical system $(A\times A, F_{\frak p}\times V_{\frak p})$, together with the diagonal subvariety of $A\times A$, gives a counterexample to Conjecture \ref{Zhang}.
\end{thrm}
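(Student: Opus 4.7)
The plan is to route everything through the CM structure and then handle three substantive points in turn: polarizability of each factor, density of preperiodic points on $\Delta$, and the failure of $\Delta$ itself to be preperiodic.

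First, since $A$ has CM by $\mathcal{O}_k$ and $\mathfrak{p}\notin S_1\cup S_2$, the endomorphisms $F_\mathfrak{p}$ and $V_\mathfrak{p}$ correspond under the CM embedding to elements $\pi,\bar\pi\in\mathcal{O}_k$ satisfying $\pi\bar\pi=q:=N_{k/\mathbb{Q}}(\mathfrak{p})$. Fix a symmetric ample line bundle $\mathcal{L}$ on $A$; since its associated Rosati involution restricts on $\mathrm{End}^{0}(A)=k$ to complex conjugation, the polarizability lemma of \cite{Paz} applies with $\pi^{\dagger}\pi=\bar\pi\pi=q$ to give $F_\mathfrak{p}^{*}\mathcal{L}\equiv\mathcal{L}^{\otimes q}$, and the same for $V_\mathfrak{p}$. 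Passing to $A\times A$, the ample line bundle $\mathcal{M}=p_1^{*}\mathcal{L}\otimes p_2^{*}\mathcal{L}$ satisfies $(F_\mathfrak{p}\times V_\mathfrak{p})^{*}\mathcal{M}\equiv\mathcal{M}^{\otimes q}$, so the product morphism is polarized of weight $q>1$.

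Next, I would note that $F_\mathfrak{p}$ and $V_\mathfrak{p}$ are both isogenies (their composition is $[q]$), so each of their preperiodic sets coincides with $A(\bar k)_{\mathrm{tors}}$. Therefore, for every torsion point $x$, the orbit of $(x,x)$ under $F_\mathfrak{p}\times V_\mathfrak{p}$ is finite, hence $(x,x)$ is preperiodic. Zariski density of torsion in $A$ transports via the diagonal embedding to Zariski density of these preperiodic points in $\Delta$.

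The main obstacle is to show that $\Delta$ is \emph{not} itself preperiodic, and it is exactly the hypothesis $\mathfrak{p}\notin S_3$ that handles this. Writing $\Delta_n:=(F_\mathfrak{p}\times V_\mathfrak{p})^{n}(\Delta)$, a direct iteration gives $\Delta_n=\{(\pi^{n}x,\bar\pi^{n}x):x\in A\}$, the image of $A$ under the homomorphism $(\pi^{n},\bar\pi^{n})\colon A\to A\times A$. Using the CM action of $k$ on $A\times A$, such abelian subvarieties of dimension $\dim A$ are classified (up to the relation induced by multiplication by elements of $k^{\times}$) by the $k$-line spanned by $(\pi^{n},\bar\pi^{n})$ in $k^{2}\cong\mathrm{Hom}(A,A\times A)\otimes\mathbb{Q}$, equivalently the line through $(1,(\bar\pi/\pi)^{n})$. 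Hence $\Delta_n=\Delta_m$ if and only if $(\bar\pi/\pi)^{n-m}=1$, i.e.\ $\pi^{n-m}=\bar\pi^{n-m}$ in $\mathcal{O}_k$, and this is precisely what $\mathfrak{p}\notin S_3$ rules out for every positive $n-m$. The orbit of $\Delta$ is therefore infinite, so $\Delta$ is not preperiodic, and the triple $(A\times A,\,F_\mathfrak{p}\times V_\mathfrak{p},\,\Delta)$ violates Conjecture~\ref{Zhang}. The one point requiring care is the $k$-linear classification of subvarieties in Step~3, which in the non-simple case should be run on each isotypic factor separately; but the conclusion $\Delta_n=\Delta_m\Leftrightarrow\pi^{n-m}=\bar\pi^{n-m}$ is preserved.
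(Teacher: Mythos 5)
Your proposal is correct and hits the same three milestones as the paper's proof (polarizability of both factors, density of diagonal torsion points among preperiodic points, non-preperiodicity of $\Delta$ via the exclusion of $\mathfrak{p}$ from $S_3$), but by partly different means. For polarizability, the paper quotes \cite{DaHi} (Prop.\ 3.2--3.3) to polarize $F_{\mathfrak p}$ by a symmetric bundle $\mathcal{L}$ with weight $N=\mathrm{Norm}(\mathfrak{p})$, and then transfers the polarization to $V_{\mathfrak p}$ through Lemma \ref{pola compo} applied to $F_{\mathfrak p}\circ V_{\mathfrak p}=[N]$, getting $V_{\mathfrak p}^*(\mathcal{L}^{\otimes N})=(\mathcal{L}^{\otimes N})^{\otimes N}$; you instead apply the Rosati criterion (in effect Proposition \ref{Rosati}) with $\pi^{\dagger}\pi=\bar\pi\pi=q$ to polarize $F_{\mathfrak p}$ and $V_{\mathfrak p}$ by $\mathcal{L}$ directly and with the same weight $q$. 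Both routes are legitimate; note only that the Rosati criterion lives in $\mathrm{NS}_{\mathbb{Q}}(A)$, so it gives $\psi^*\mathcal{L}=\mathcal{L}^{\otimes q}$ a priori up to algebraic equivalence (a $2$-torsion discrepancy for symmetric bundles), and that the Shimura--Taniyama input you use --- that for $\mathfrak{p}\notin S_1\cup S_2$ the Frobenius corresponds to $\pi\in\mathcal{O}_k$ with $\pi\bar\pi=q$ and Rosati acting as complex conjugation --- is exactly what the citation of \cite{DaHi} covers, so your first step rests on the same underlying facts. Where your write-up genuinely adds something is the last step: the paper asserts without detail that preperiodicity of $\Delta$ would force $F_{\mathfrak p}^m=V_{\mathfrak p}^m$, whereas you prove it, identifying $(F_{\mathfrak p}\times V_{\mathfrak p})^n(\Delta)$ with the image of $(\pi^n,\bar\pi^n)\colon A\to A\times A$ and observing that, since $\mathrm{End}^0_{\bar k}(A)=k$ is a field (so $A$ is simple and your hedge about the non-simple case is vacuous here), two such images coincide exactly when the vectors span the same $k$-line in $k^2\cong\mathrm{Hom}^0(A,A\times A)$; this yields $\Delta_n=\Delta_m\Leftrightarrow\pi^{n-m}=\bar\pi^{n-m}$, which is excluded by $\mathfrak{p}\notin S_3$ read, as in the paper's proof, as saying that no power of $F_{\mathfrak p}$ equals the same power of $V_{\mathfrak p}$ (the definition of $S_3$ in the introduction is stated with the quantifier reversed, and you have adopted the reading that the proof actually uses).
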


This statement shows that one can actually use any CM abelian variety to construct a counterexample. Moreover, the manner in which these examples are produced is more geometric and explains the origin of the first examples. 
 
There is a new version of Conjecture \ref{Zhang} that can be found in \cite{GhTuZh},  where one takes into account the action on the tangent space of $T_{X,x}$ at preperiodic points $x$. More precisely one has:

\begin{conj}\label{GhiocaTuckerZhang} (Ghioca, Tucker, Zhang)
Let $\psi\!\!: X\to X$ be an endomorphism of a smooth projective variety over a number field $k$ with a polarization, and let $Y$ be a subvariety of $X$. Then $Y$ is preperiodic under $\psi$ if and only if there exists a Zariski dense subset of points $x\in{Y\cap\mathrm{Prep}_{\psi}(X)}$ such that the tangent subspace of $Y$ at $x$ is preperiodic under the induced action of $\psi$ on the Grassmanian $\mathrm{Gr}_{\mathrm{dim}(Y)}(T_{X,x})$.
\end{conj}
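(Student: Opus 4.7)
The plan is to split the biconditional into its two directions and treat them separately. The forward direction, that a preperiodic $Y$ satisfies the tangent condition on a dense set, is essentially formal. If $\psi^{m+s}(Y)=\psi^{m}(Y)$, then the orbit $\{\psi^{j}(Y)\}_{j\geq 0}$ is a finite collection of subvarieties. At any smooth point $x\in Y\cap\mathrm{Prep}_{\psi}(X)$ that also lies in the smooth locus of every $\psi^{j}(Y)$ appearing in its orbit, the chain rule applied to each iterate $d\psi^{j}\colon T_{Y,x}\to T_{\psi^{j}(Y),\psi^{j}(x)}$, combined with the eventual periodicity of both $\psi^{j}(Y)$ and $\psi^{j}(x)$, forces $T_{Y,x}$ to be preperiodic under the induced action on the Grassmannian. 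Fakhruddin's density theorem, which applies since $\psi$ is polarized, guarantees that preperiodic points are Zariski dense in $Y$, so this produces the required Zariski dense locus.

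For the converse direction, I plan to combine arithmetic equidistribution with the tangent rigidity. Zariski density of $Y\cap\mathrm{Prep}_{\psi}(X)$ in $Y$ already gives $\hat{h}_{\mathcal{L}}(Y)=0$ by Zhang's theorem, and Yuan's equidistribution of small points then yields a dynamically invariant measure on $Y^{\mathrm{an}}$ at every place. The new input is the tangent-space condition: at each $x$ in the dense set there exist integers $m(x),s(x)$ with $d\psi^{m(x)+s(x)}(T_{Y,x})=d\psi^{m(x)}(T_{Y,x})$ as points of $\mathrm{Gr}_{\dim Y}(T_{X,\psi^{m(x)}(x)})$. Noetherian pigeonhole lets me pass to a Zariski dense subset on which $(m,s)$ is constant, so that the Hilbert scheme points $[\psi^{m+s}(Y)]$ and $[\psi^{m}(Y)]$ osculate along a dense family of preperiodic points. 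The hope is then to promote this infinitesimal coincidence to equality of subvarieties: two reduced irreducible subvarieties of the same dimension that share tangent planes along a Zariski dense set should be forced to agree.

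The principal obstacle is exactly this last infinitesimal-to-global step for a general polarized $(X,\psi)$. In the abelian setting it is reasonable to combine the tangent data with the Bogomolov--Ullmo--Zhang classification of small-height subvarieties to reduce $Y$ to a torsion translate of an abelian subvariety stable under a power of $\psi$, which would settle the case directly relevant to the counterexamples produced by Theorem \ref{Frob}. For general smooth projective $X$ no such classification is available, and the absence of an analogue of Bogomolov--Ullmo--Zhang is precisely what keeps Conjecture \ref{GhiocaTuckerZhang} open. I therefore expect the plan to succeed unconditionally only in the CM abelian case, and to remain blocked in full generality by this missing structure theorem.
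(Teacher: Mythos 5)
The statement you have been asked to prove is Conjecture~\ref{GhiocaTuckerZhang}: the paper contains no proof of it, and it is an open conjecture. The only established case the paper invokes is Theorem~2.1 of \cite{GhTuZh}, which settles it for group endomorphisms of abelian varieties; the paper's actual use of the conjecture runs in the opposite direction, applying that known case to conclude that for the diagonal $\Delta$ in Theorem~\ref{Frob} the tangent spaces $T_{\Delta,x}$ are \emph{not} preperiodic, since $\Delta$ is not a preperiodic subvariety. So there is no ``paper's own proof'' to compare against, and your proposal, which you yourself concede does not close, cannot be accepted as a proof. Your forward direction is essentially the right formal argument (and is the content of the easy implication in \cite{GhTuZh}), modulo the usual care at points where $d\psi$ fails to be injective or where some $\psi^{j}(Y)$ is singular.

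The converse direction has a gap earlier than the one you identify. The ``Noetherian pigeonhole'' step --- passing to a Zariski dense subset on which the preperiodicity data $(m,s)$ is constant --- is false. Take $X=A$ an abelian variety, $\psi=[2]$, $Y=A$: the preperiodic points are exactly the torsion points and are Zariski dense, but for each fixed pair $(m,s)$ the locus $\{x : \psi^{m+s}(x)=\psi^{m}(x)\}=\mathrm{Ker}[2^{m}(2^{s}-1)]$ is finite, hence never dense. A Zariski dense set can be a countable union of nowhere dense pieces, so no single $(m,s)$ can be extracted, and the subsequent comparison of $[\psi^{m+s}(Y)]$ with $[\psi^{m}(Y)]$ never gets off the ground. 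This is in addition to the infinitesimal-to-global obstacle you name (and note that two distinct subvarieties can indeed share tangent planes along large subsets, so even that step needs more than dimension counting). The honest conclusion is that the biconditional remains a conjecture; only special cases --- group endomorphisms \cite{GhTuZh}, and the $\mathbb{P}^1\times\mathbb{P}^1$ polynomial setting of \cite{BaHsi} for the original Conjecture~\ref{Zhang} --- are known.
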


\begin{rem}Theorem 2.1 of \cite{GhTuZh} implies that Conjecture \ref{GhiocaTuckerZhang} holds when $\psi$ is a group endomorphism. Hence our constructions provide examples of subvarieties $Y$ containing a Zariski dense set of preperiodic points $x$ but such that $T_{Y,x}$ is not preperiodic under the action of $\psi$. Another conjecture concerning this problem can be found in a recent work of Yuan and Zhang \cite{YuZh}.
\end{rem}

Using Latt\`es maps one can transport these constructions on $\mathbb{P}^1\times\mathbb{P}^1$, hence using the Segre embedding, also on $\mathbb{P}^3$. A good question for future work would then be: what happens on $\mathbb{P}^2$? In the next section we give the proof of Theorem \ref{Frob}. Then, as previously stated in \cite{Paz}, \cite{DaHi} and \cite{Pol}, obtaining polarizability can be difficult and is linked to arithmetic properties of the field $k$. Thus, we complete this work by giving some explicit polarizability criteria for morphisms between abelian varieties.

\section{Abelian varieties and Frobenius maps}

Let us start with the following lemma:

\begin{lem}\label{pola compo}
Let $A$ be a projective variety over a field $k$, of dimension $g$ and let $\mathcal{L}$ be an ample line bundle. Let $f,j,h$ be three endomorphisms of $A$ such that $f\circ j=h$. Suppose $f^*\mathcal{L}=\mathcal{L}^{\otimes d}$ and $h^*\mathcal{L}=\mathcal{L}^{\otimes n}$. Then $j$ is polarized by $\mathcal{M}=\mathcal{L}^{\otimes d}$, $d\mid n$ and $j^*\mathcal{M}=\mathcal{M}^{\otimes \frac{n}{d}}$.
\end{lem}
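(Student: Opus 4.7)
My plan is to begin from the functoriality of pullback. Since $h = f\circ j$, we have $h^{*} = j^{*}\circ f^{*}$, so applying both sides to $\mathcal{L}$ yields
\[
\mathcal{L}^{\otimes n} \;=\; h^{*}\mathcal{L} \;=\; j^{*}(f^{*}\mathcal{L}) \;=\; j^{*}(\mathcal{L}^{\otimes d}) \;=\; (j^{*}\mathcal{L})^{\otimes d}.
\]
Setting $\mathcal{M} := \mathcal{L}^{\otimes d}$ this rewrites as $j^{*}\mathcal{M} = (j^{*}\mathcal{L})^{\otimes d} = \mathcal{L}^{\otimes n}$. The line bundle $\mathcal{M}$ is ample since $\mathcal{L}$ is and $d\geq 1$, so the lemma will be proved once we show $d\mid n$, for then the identity $\mathcal{L}^{\otimes n} = \mathcal{M}^{\otimes n/d}$ lets us read off $j^{*}\mathcal{M} = \mathcal{M}^{\otimes n/d}$ at once.

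To extract the divisibility I would pass to intersection numbers. The morphisms $f$ and $h$ are polarized, hence finite and surjective; since $h = f\circ j$ is finite, the factor $j$ must be finite as well. On the $g$-dimensional projective variety $A$, the projection formula gives $\varphi^{*}\mathcal{L}^{g} = \deg(\varphi)\cdot \mathcal{L}^{g}$ for any finite endomorphism $\varphi$, so taking $g$-fold self-intersection in $f^{*}\mathcal{L} = \mathcal{L}^{\otimes d}$ and $h^{*}\mathcal{L} = \mathcal{L}^{\otimes n}$ yields $\deg(f) = d^{g}$ and $\deg(h) = n^{g}$. The multiplicativity $\deg(h) = \deg(f)\deg(j)$ then forces $\deg(j) = (n/d)^{g}$, which, being a positive integer, shows $d^{g}\mid n^{g}$ in $\mathbb{Z}$; comparing $p$-adic valuations prime by prime gives $d\mid n$.

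The main obstacle, and the entire content of the lemma, is really this divisibility step: the bare identity $(j^{*}\mathcal{L})^{\otimes d} = \mathcal{L}^{\otimes n}$ in $\mathrm{Pic}(A)$ does not a priori present $j^{*}\mathcal{L}$ as a tensor power of $\mathcal{L}$, and the point of introducing the coarser polarization $\mathcal{M} = \mathcal{L}^{\otimes d}$ is exactly to absorb this ambiguity and let the integrality of $\deg(j)$ produce the integer exponent $n/d$. Everything else (the isomorphism chain, the ampleness of $\mathcal{M}$, and the final formula $j^{*}\mathcal{M} = \mathcal{M}^{\otimes n/d}$) is then formal.
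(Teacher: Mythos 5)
Your proposal is correct and follows essentially the same route as the paper: pull back along $h=f\circ j$ to get $j^{*}(\mathcal{L}^{\otimes d})=\mathcal{L}^{\otimes n}$, then compare $g$-fold self-intersection numbers to obtain $\deg(j)\,d^{g}=n^{g}$ and deduce $d\mid n$. Your only additions are making explicit the finiteness of $j$ (needed for the degree formula) and the valuation argument extracting $d\mid n$ from the integrality of $(n/d)^{g}$, both of which the paper leaves implicit.
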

\begin{proof}
We calculate $(f\circ j)^*\mathcal{L}=j^*(f^*\mathcal{L})=j^*(\mathcal{L}^{\otimes d})$, hence $j^*(\mathcal{L}^{\otimes d})=\mathcal{L}^{\otimes n}$. Take the degree (associated to $\mathcal{L}$) to obtain: $\deg_{\mathcal{L}}(j)<\mathcal{L}^{\otimes d}>^g=<\mathcal{L}^{\otimes n}>^g$, where $<.>^g$ stands for the self-intersection product. It shows that $\deg_\mathcal{L}(j) d^g <\mathcal{L}>^g=n^g <\mathcal{L}>^g$, which is equivalent to $\deg_\mathcal{L}(j) d^g=n^g$ as $\mathcal{L}$ is ample. So there exists an integer $m$ such that $n=dm$. This shows $j^*(\mathcal{L}^{\otimes d})=(\mathcal{L}^{\otimes d})^{\otimes m}$.
\end{proof}

We will now move on to the proof of Theorem \ref{Frob}. 

\begin{proof}
 One remark is that if $k$ is large enough for $A$ to obtain semi-stable reduction, then $S_2$ is empty. Indeed, as $A$ has complex multiplications, it has potentially good reduction everywhere. One finds in \cite{DaHi}, on  pages 16-17, Proposition 3.2-3.3, that if $\frak{p}\notin{S_1\cup S_2}$, then the Frobenius $F_{\frak p}$ is polarized by a symmetric line bundle $\mathcal{L}$. 

Let $V_{\frak p}$ denote the Verschiebung associated to $F_{\frak p}$. Let $N=\mathrm{Norm}({\frak p})$. We know that $F_{\frak p}^*(\mathcal{L})=\mathcal{L}^{\otimes N}$ and that $[N]^*{\mathcal{L}}=\mathcal{L}^{N^2}$. As we have $F_{\frak p}\circ V_{\frak p}=[N]$, we can apply Proposition \ref{pola compo} to obtain $V_{\frak p}^*(\mathcal{L}^{\otimes N})=(\mathcal{L}^{\otimes N})^{\otimes N}=\mathcal{L}^{\otimes N^2}$, hence $V_{\frak p}$ is also polarizable, and with the same weight. Thus $F_{\frak p}\times V_{\frak p}$ is also polarized on $A\times A$.

Now consider $\Delta=\{(P,P)\,\vert\, P\in{A}\}$, the diagonal subvariety of $A\times A$. This variety cannot be preperiodic under $\varphi=F_{\frak p}\times V_{\frak p}$ because it would imply $F_{\frak p}^{m}=V_{\frak p}^m$ for a positive integer $m$, which is impossible because it would imply $\frak{p}\in{S_3}$. However, $\Delta\cap\mathrm{Prep}_{\varphi}(A\times A)$ is dense in $\Delta$ because it contains all torsion points of $\Delta$.

\end{proof}

\begin{rem}
This theorem is a way to generalize the previous counterexamples of \cite{GhTuZh} and \cite{Paz}. For example, when an elliptic curve $E$ has a multiplication by $[i]$, the morphism $[2+i]$ corresponds to the Frobenius $F_{(2-i)}$, as is shown in \cite{Sil}, Proposition 4.2, page 122.
\end{rem}

\begin{rem}\label{popo}
This theorem explains as a by-product that one needs to avoid the ramified places if one searches for polarizability in the CM case. The discriminant of the number field $\mathbb{Q}[i]$  is $-4$. As $(1+i)\,\vert\, (2)$, the morphism $[1+i]$ will not be polarizable. On the contrary, as $(2+i)$ and $(2)$ are coprime ideals, $[2+i]$ is polarizable. This refines the results in \cite{Pol}.
\end{rem}

\begin{rem}
One can construct examples of polarized morphisms on $A\times A$ like in Theorem \ref{Frob} as soon as $\mathbb{Z}\subsetneq\mathrm{End}(A)$ and the Rosati involution is not trivial. We refer to \cite{Mum}, page 200, Theorem 2, for the classification of division algebras that can occur for $\mathrm{End}_\mathbb{Q}(A)$.
\end{rem}

\section{Polarizability criteria}

A classical tool to obtain polarizability is the cube theorem. We refer to \cite{Paz} for some formulas derived from this theorem and useful to obtain information on the weight of complex multiplications. 

In this section we give other polarizability criteria, namely Proposition \ref{Rosati} for the action of the Rosati involution and Proposition \ref{elliptic} for the particular case of elliptic curves.

\subsection{Rosati involution}

Let $A$ be an abelian variety over a field $k$ and $\mathcal{L}$ be an ample line bundle. We denote by $\dagger$ the Rosati involution associated to $\mathcal{L}$. (See \cite{Mil}, page 137, for more details.) For any invertible line bundle $\mathcal{M}$, we let $\varphi_{\mathcal{M}}$ denote the classical map $a\mapsto t^*_a\mathcal{M}\otimes \mathcal{M}^{-1}$ from $A$ to $\mathrm{Pic}^0(A)$.

\begin{prop}\label{Rosati}
Let $A$ be an abelian variety, $\psi$ an endomorphism of $A$ and $\mathcal{L}$ an ample line bundle. Then $\psi$ is polarized by $\mathcal{L}$ with weight $d$ if and only if one has $\psi^\dagger \psi =[d]$, where $\dagger$ is associated to $\mathcal{L}$.
\end{prop}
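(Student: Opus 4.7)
The plan is to reduce the proposition to the classical compatibility between pullback of line bundles and duality on an abelian variety. The key input is the standard formula
\[
\varphi_{\psi^*\mathcal{L}} \;=\; \hat{\psi}\circ \varphi_{\mathcal{L}}\circ \psi,
\]
valid for any endomorphism $\psi$ of $A$ and any line bundle $\mathcal{L}$ (see Mumford's \emph{Abelian Varieties}, or Milne's notes referenced by the author). Combined with the identity $\varphi_{\mathcal{L}^{\otimes d}} = [d]\circ \varphi_{\mathcal{L}} = \varphi_{\mathcal{L}}\circ [d]$ (which ultimately comes from the theorem of the cube), this should make the equivalence a mechanical manipulation in the endomorphism algebra.

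For the direction ``$\psi$ polarized by $\mathcal{L}$ with weight $d$ $\Rightarrow$ $\psi^\dagger\psi=[d]$'', I would start from the hypothesis $\psi^{*}\mathcal{L}\cong \mathcal{L}^{\otimes d}$. Applying $\varphi_{(\cdot)}$ to both sides and using the two formulas above gives $\hat{\psi}\circ\varphi_{\mathcal{L}}\circ\psi = [d]\circ \varphi_{\mathcal{L}}$. Since $\mathcal{L}$ is ample, $\varphi_{\mathcal{L}}$ is an isogeny, hence invertible in $\mathrm{End}^0(A)=\mathrm{End}(A)\otimes \mathbb{Q}$. Pre-composing with $\varphi_{\mathcal{L}}^{-1}$ yields
\[
\varphi_{\mathcal{L}}^{-1}\circ \hat{\psi}\circ \varphi_{\mathcal{L}}\circ \psi \;=\; [d],
\]
which is exactly $\psi^{\dagger}\psi=[d]$ by definition of the Rosati involution. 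Note that although $\psi^\dagger$ lives a priori in $\mathrm{End}^0(A)$, the identity shows $\psi^\dagger\psi\in\mathrm{End}(A)$.

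For the converse, I would reverse the computation: starting from $\psi^\dagger\psi=[d]$, multiply on the left by $\varphi_{\mathcal{L}}$ to obtain $\hat{\psi}\circ\varphi_{\mathcal{L}}\circ\psi = \varphi_{\mathcal{L}}\circ [d] = \varphi_{\mathcal{L}^{\otimes d}}$, i.e.\ $\varphi_{\psi^{*}\mathcal{L}} = \varphi_{\mathcal{L}^{\otimes d}}$. Here lies the main subtlety I expect to face: this only forces equality of the two classes in the Néron--Severi group $\mathrm{NS}(A)$, since the kernel of $\mathcal{M}\mapsto \varphi_{\mathcal{M}}$ is $\mathrm{Pic}^{0}(A)$. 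However the injectivity of $\mathrm{NS}(A)\hookrightarrow \mathrm{Hom}(A,\hat{A})$ (a standard fact) ensures $\psi^{*}\mathcal{L}\equiv \mathcal{L}^{\otimes d}$ numerically, and since the notion of polarization by $\mathcal{L}$ with weight $d$ is intrinsic to the class of $\mathcal{L}$ in $\mathrm{NS}(A)$, this suffices. The bulk of the proof is therefore formal; the only point that really requires care is the inversion of $\varphi_{\mathcal{L}}$ in $\mathrm{End}^{0}(A)$ and the reduction modulo $\mathrm{Pic}^{0}(A)$ in the converse direction.
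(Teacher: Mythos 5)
Your argument is essentially the paper's proof: the author invokes the isomorphism $H:\mathrm{NS}_{\mathbb{Q}}(A)\to\mathrm{End}^{\dagger}_{\mathbb{Q}}(A)$, $\mathcal{M}\mapsto\varphi_{\mathcal{L}}^{-1}\circ\varphi_{\mathcal{M}}$, and computes $H(\psi^*\mathcal{L})=\psi^{\dagger}\psi$ and $H(\mathcal{L}^{\otimes d})=[d]$, which is precisely your manipulation with $\varphi_{\psi^*\mathcal{L}}=\hat{\psi}\circ\varphi_{\mathcal{L}}\circ\psi$ and $\varphi_{\mathcal{L}^{\otimes d}}=[d]\circ\varphi_{\mathcal{L}}$ left implicit. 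The $\mathrm{Pic}^{0}$ subtlety you flag in the converse is equally present in the paper (working in $\mathrm{NS}_{\mathbb{Q}}$ means the equivalence is really one of N\'eron--Severi classes), so your treatment is, if anything, the more careful of the two.
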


\begin{proof}
Let $\mathrm{NS}_{\mathbb{Q}}(A)$ be the N\'eron-Severi group of $A$ tensored by $\mathbb{Q}$. Let $\mathrm{End}^{\dagger}_{\mathbb{Q}}(A)$ be the group of endomorphism fixed by $\dagger$.  We thus have an isomorphism (see \cite{Mil}, page 137)

\begin{align*} 
 H\!\!: \mathrm{NS}_{\mathbb{Q}}(A) &\longrightarrow \mathrm{End}^{\dagger}_{\mathbb{Q}}(A)\\
  \mathcal{M}& \mapsto \varphi_{\mathcal{L}}^{-1}\circ \varphi_{\mathcal{M}}. 
\end{align*}

We then calculate that $H(\psi^*\mathcal{L})=\psi^{\dagger} \psi $ and $H(\mathcal{L}^{\otimes d})=[d]$, thus we can express the polarizability condition $\psi^*\mathcal{L}=\mathcal{L}^{\otimes d}$ by $\psi^{\dagger} \psi =[d]$. 
\end{proof}

\begin{rem}\label{CM unit}
If $A$ has complex multiplication, then one has $\psi^{\dagger}=\overline{\psi}$. In order to study Conjecture \ref{Zhang}, we need to find endomorphisms $\psi$ such that for all integers $m\geq1$, $\psi^m\neq (\psi^{\dagger})^m$. Hence, in the CM case, one needs to find a number $\alpha$ such that $\alpha \overline{\alpha}\in{\mathbb{Z}}$, $\alpha\notin{\mathbb{Z}}$ and $\alpha/\overline{\alpha}$ is not a root of unity. If $\alpha=a+ib$ where $a,b\in{\mathbb{Z}}$ and $b\neq0$, then $\alpha/\overline{\alpha}$ has norm one and there exists a real number $\theta$ such that $\sin\theta=2ab/(a^2+b^2)$. One just needs to check that $\theta$ is not always a rational number. It is very rarely the case, as $\sin\theta$ is itself rational.
\end{rem}

\textbf{Application:} \textit{Multiplication by $1+\zeta_{5}$ not polarized by $\Theta$.} Thanks to this Rosati action, we can add a remark to one of the examples of \cite{Paz} where the morphism $[1+\zeta_5]$ is not polarized by the divisor $\Theta$ on the jacobian of a genus 2 curve. A little calculation in the particular example of $z=4+3\zeta_5+12\zeta_5^2$ gives $z\overline{z}=121$ and $z/\overline{z}$ is not a root of unity. Hence, by Proposition \ref{Rosati}, we obtain that $[z]$ is polarized by $\Theta$.

\subsection{Elliptic curves}

In the particular case of elliptic curves, one can obtain a precise condition for polarizability. There are two reasons for this:  the variety is simple and the divisor support is just a point.
\begin{prop}\label{elliptic}
Let $E$ be an elliptic curve over a field $k$ of characteristic zero and $f$ be an isogeny of $E$ of degree $d$. We denote by $E[2]$ the group of $2$-torsion points over $\bar{k}$. Then we have $$\Big(\mathrm{Card}(E[2]\cap \mathrm{Ker}(f))\neq 2 \Big)\Leftrightarrow \Big(f^*(O)\sim d(O)\Big).$$
\end{prop}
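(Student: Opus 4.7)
The plan is to translate the linear-equivalence condition $f^{*}(O)\sim d(O)$ into an equation in the group law of $E$, then evaluate a sum over $\mathrm{Ker}(f)$ by pairing each element with its inverse. First, I would use the characteristic zero hypothesis to conclude that $f$ is separable, so that $f^{*}(O)=\sum_{P\in\mathrm{Ker}(f)}(P)$ is the reduced effective divisor of degree $d$ supported on the kernel. Then I would invoke the classical Abel--Jacobi isomorphism $\mathrm{Pic}^{0}(E)\cong E$: a degree-$d$ divisor $\sum n_{i}(P_{i})$ is linearly equivalent to $d(O)$ if and only if $\sum n_{i}P_{i}=O$ under the group law. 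Applied to $f^{*}(O)$, the target condition becomes $\sigma:=\sum_{P\in\mathrm{Ker}(f)}P=O$ in $E(\bar{k})$.

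Next I would compute $\sigma$. Since $\mathrm{Ker}(f)$ is a finite abelian group, the involution $P\mapsto -P$ pairs each non-$2$-torsion element with its distinct inverse, so those contributions cancel in $\sigma$. Only the elements of $H:=E[2]\cap\mathrm{Ker}(f)$ survive, giving $\sigma=\sum_{P\in H}P$. Being a subgroup of $E[2]\cong(\mathbb{Z}/2\mathbb{Z})^{2}$, the group $H$ has order $1$, $2$, or $4$.

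Finally I would split into these three cases. If $|H|=1$, then $\sigma=O$ trivially. If $|H|=2$, then $H=\{O,T\}$ with $T$ a nontrivial $2$-torsion point, so $\sigma=T\neq O$. If $|H|=4$, then $H=E[2]$, and writing its nontrivial elements $T_{1},T_{2},T_{3}$ with $T_{1}+T_{2}=T_{3}$ yields $T_{1}+T_{2}+T_{3}=2T_{3}=O$, so $\sigma=O$. Thus $\sigma=O$ holds precisely when $|H|\neq 2$, which is the equivalence asserted in the proposition.

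I do not anticipate a real obstacle; the hypothesis $\mathrm{char}(k)=0$ is used only to guarantee separability of $f$, and the rest is a standard Picard-group argument combined with the elementary pairing trick above.
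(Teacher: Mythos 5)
Your proposal is correct and follows essentially the same route as the paper: pair each non-$2$-torsion element of $\mathrm{Ker}(f)$ with its inverse, reduce to the subgroup $H=E[2]\cap\mathrm{Ker}(f)$, and split into the cases $\mathrm{Card}(H)\in\{1,2,4\}$. The only cosmetic difference is that you invoke the Abel--Jacobi summation criterion for linear equivalence, whereas the paper exhibits the explicit functions $x-x(P)$ and $y$ on a Weierstrass model, which amounts to the same computation.
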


\begin{proof}
Let $H=E[2]\cap \mathrm{Ker}(f)$ and $G=\mathrm{Ker}(f)\backslash H$. Let $m=\mathrm{Card}(H)$. We know that $m\in\{1,2,4\}$. Let us choose any short Weierstrass model $y^2=x^3+ax+b$. We calculate:  $$f^*(O)=\sum_{P\in{\mathrm{Ker}(f)}}(P)=\sum_{P\in{H}}(P)+\sum_{P\in{G}}(P)=\sum_{P\in{H}}(P)+\sum_{P\in{G/\pm1}}((P)+(-P)),$$
and as $\mathrm{div}(x-x(P))=(P)+(-P)-2(O)$, we have $$\displaystyle{\sum_{P\in{G/\pm1}}((P)+(-P))\sim 2\Big(\frac{d-m}{2}\Big)(O)},$$ thus $\displaystyle{f^*(O)\sim \sum_{P\in{H}}(P)+(d-m)(O)}$. Then we have three cases: 
\begin{itemize}
\item[$\bullet$] if $m=1$, then $H=\{O\}$ and $f^{*}(O)\sim (O)+(d-1)(O) \sim d(O)$,
\item[$\bullet$] if $m=2$, then $H=\{O,\,P\}$ and $f^{*}(O)\sim (O)+(P)+(d-2)(O) \sim (P)+(d-1)(O)$, then the divisor $D=(P)-(O)$ is of degree $0$ but is not the divisor of a rational function on $E$ because $P-O\neq O$,
\item[$\bullet$] if $m=4$, then $H=\{O,P_1,P_2, P_1+P_2\}$ and $f^{*}(O)\sim (O)+(P_1)+(P_2)+(P_1+P_2)+(d-4)(O) \sim d(O)$ because $\mathrm{div}(y)=(P_1)+(P_2)+(P_1+P_2)-3(O)$.
\end{itemize}
\end{proof}

\begin{rem}\label{rem elliptic}
Let $E$ be the elliptic curve with affine model $y^2=x^3+x$. It has complex multiplication by $[i]:(x,y)\to (-x,iy)$. Then the isogeny $[1+i]$ has degree $2$, and as $[2]=[1+i][1-i]$, we obtain $\mathrm{Ker}[1+i]\subset E[2]$, hence $\mathrm{Card}(E[2]\cap \mathrm{Ker}[1+i])=2$ and by Proposition \ref{elliptic} the morphism $[1+i]$ is not polarized. As shown in \cite{Paz}, the morphism $[2+i]$ is polarized. Another proof of this fact is that $[5]=[2+i][2-i]$, hence $\mathrm{Ker}[2+i]\subset E[5]$, but $E[5]\cap E[2]=O$ and again by Proposition \ref{elliptic} the morphism $[2+i]$ is polarized, with weight $5$. 
\end{rem}

\begin{rem}
This result refines the remark made in the introduction of \cite{Pol} concerning elliptic curves. See the remark \ref{popo} for more details.
\end{rem}

\subsection{Explicit Latt\`es dynamical system}

Let $E$ be the elliptic curve with affine model $y^2=x^3+x$. It has complex multiplication by $[i]\!\!:(x,y)\to (-x,iy)$. Let $\pi\!\!:E\to \mathbb{P}^1$ be defined by $\pi(x,y)=x$ and $\pi(O)=\infty$. Then we obtain the following picture:

\dgARROWLENGTH=0.5cm 

\[
\begin{diagram}
\node{E}
\arrow[3]{e,t}{[2+i]}
\arrow[3]{s,l}{\pi}
\node[3]{E}
\arrow[3]{s,r}{\pi}\\\\\\
\node{\mathbb{P}^1}
\arrow[3]{e,b}{\varphi}
\node[3]{\mathbb{P}^1}
\end{diagram}
\]

A direct calculation shows that the Latt\`es map $\varphi$ can be expressed on the affine chart as $$\varphi(x)=\frac{(3-4i)x(x^2+1-2i)^2}{(5x^2+1+2i)^2}.$$ 

\begin{rem}
We thus obtain the $x$-coordinate of the four non-trivial $[2+i]$-torsion points: $\pm x=\sqrt{\frac{\sqrt{5}-1}{10}}+i\,\frac{1}{5}\sqrt{\frac{10}{\sqrt{5}-1}}$.
\end{rem}

For the Latt\`es map of $[2-i]$, we obtain on the affine chart $$\psi(x)=\frac{(3+4i)x(x^2+1+2i)^2}{(5x^2+1-2i)^2}.$$

Now consider
\begin{align*} 
 \delta\!\!:\mathbb{P}^1\times \mathbb{P}^1 &\longrightarrow \mathbb{P}^1\times \mathbb{P}^1\\
 (s,t)& \longmapsto (\varphi(s),\psi(t)). 
\end{align*}

Then if $D=\{\infty\}\times\mathbb{P}^1+\mathbb{P}^1\times\{\infty\}$, we obtain $\delta^*D\sim5D$, hence $\delta$ is polarized by $D$ with weight $5$. This gives an explicit counterexample to Conjecture \ref{Zhang} in the case of $X=\mathbb{P}^1\times \mathbb{P}^1$ as in the constructions of \cite{GhTuZh}. See \cite{Fak} for Latt\`es dynamical systems.


\begin{thebibliography}{10}

\bibitem{BaHsi} Baker, M. and Hsia, L.-C. 
\emph{Canonical heights, transfinite diameters, and polynomial dynamics.}, J. Reine angew. Math. {\bf 585} (2005): 61--92.

 \bibitem{DaHi} David, S. and Hindry, M. 
\emph{Minoration de la hauteur de N\'eron-Tate sur les vari\'et\'es ab\'eliennes de type CM}, J. Reine angew. Math. {\bf 529} (2000): 1--74.

\bibitem{Fak} Fakhruddin, N. 
\emph{Questions on self maps of algebraic varieties}, Ramanujan Math. Soc. 
 {\bf 18} (2003): 109--122.
 
\bibitem{GhTuZh} Ghioca, D. , Tucker, T. and Zhang, S.-W. 
\emph{Towards a dynamical Manin-Mumford conjecture}, {Internat. Math. Res. Notices}, {DOI: 10.1093/imrn/rnq283}, {\bf 22} (2011): 5109-5122.
 
 \bibitem{Mil} Milne, J. S. 
\emph{Abelian varieties}, {Arithmetic geometry}, Springer, New York,
 (1986): 103--150.
 
 \bibitem{Mum} Mumford, D.
\emph{Abelian varieties}, {Oxford University Press}, (1970).
 
 \bibitem{Paz} Pazuki, F. 
\emph{Conjecture of Zhang and squares of abelian surfaces}, {C.R. Math. Acad. Sci.}  {\bf 348} n¡9-10 (2010): 483-486.

\bibitem{Pol} Polishchuk, A.
\emph{Theta identities with complex multiplication}, {Duke Math J.}  
{\bf 96}, No2 (1999): 377--400.

\bibitem{Sil} Silverman, J.H.
\emph{Advanced topics in the arithmetic of elliptic curves}. {GTM, Springer-Verlag}
{\bf 151} (1994).

\bibitem{Zha} Zhang, S.-W. 
\emph{Distributions in algebraic dynamics}, {Survey in Differential Geometry},  vol. 10 International Press (2006): 381-430.

\bibitem{YuZh}Yuan, X. , Zhang, S.-W. 
\emph{Calabi theorem and algebraic dynamics}, Preprint.



\end{thebibliography}
\end{document}